\def\thetitle{{Constructing pseudo-Anosov maps with given dilatations}}
\newtheorem{thm}{Theorem}
\newtheorem*{thm*}{Theorem}
\theoremstyle{remark}
\newtheorem*{rem}{Remark}
\theoremstyle{definition}
\newtheorem*{defn*}{Definition}
\DeclarePairedDelimiter\floor{\lfloor}{\rfloor}
\begin{document}

\title\thetitle

\author{Hyungryul Baik}
\address{Mathematisches Institut,
Rheinische Friedrich-Wilhelms-Universit\"{a}t Bonn,
Endenicher Allee 60,
53115 Bonn, Germany}
\email{baik@math.uni-bonn.de}

\author{Ahmad Rafiqi}
\address{Department of Mathematics, Cornell University, 111 Malott Hall, Ithaca, NY 14853, USA}
\email{ar776@cornell.edu}

\author{Chenxi Wu}
\address{Department of Mathematics, Cornell University, 105 Malott Hall, Ithaca, NY 14853, USA}
\email{cw538@cornell.edu}
\date{\today}

\keywords{pseudo-Anosov, dilatation, Perron}
\begin{abstract}
In this paper, we give sufficient conditions for a Perron number, given as the leading eigenvalue of an aperiodic matrix, to be a pseudo-Anosov dilatation of a compact surface. We give an explicit construction of the surface and the map when the sufficient condition is met. 
\end{abstract}

\maketitle


\section{Introduction}

The question of which positive algebraic integers can be realized as the dilatation constant of a pseudo-Anosov surface diffeomorphism has a long history (for instance, see \cite{Farb}). A well-known necessary condition is that the number must be strictly greater in absolute value than all its Galois conjugates. 

\begin{defn*}
An algebraic integer (i.e. the root of a polynomial over the integers with leading coefficient $1$) is called \emph{\textbf{Perron}} if it is positive and strictly greater in absolute value than all its Galois conjugates. A non-negative matrix $A$ is \emph{\textbf{aperiodic}} if all the entries of $A^n$ are positive for some $n>0$.
\end{defn*}

\begin{thm}
[Perron-Frobenius] Every aperiodic matrix, $A$, has a positive leading eigenvalue, $\lambda$, and a corresponding positive eigenvector. 
\end{thm}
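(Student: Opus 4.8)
The plan is to establish the existence of a positive eigenpair by a fixed-point argument and then to upgrade the eigenvalue to a dominant (``leading'') one via a left-eigenvector pairing. First I would record the elementary observation that an aperiodic $A$ has no zero row and no zero column: a zero column of $A$ propagates to a zero column of every power $A^n$ (and symmetrically for rows), contradicting the positivity of some $A^n$. Consequently, for any nonzero $x$ in the standard simplex $\Delta=\{x\in\R^k : x_i\ge 0,\ \sum_i x_i=1\}$ we have $Ax\ne 0$, so $\phi(x)=Ax/\lVert Ax\rVert_1$ is a well-defined continuous self-map of the compact convex set $\Delta$. Brouwer's fixed point theorem then furnishes $v\in\Delta$ with $\phi(v)=v$, i.e.\ $Av=\lambda v$ with $\lambda=\lVert Av\rVert_1>0$ and $v\ge 0$, $v\ne 0$.

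Next I would promote $v$ to a strictly positive vector. Pick $n$ with $A^n>0$ entrywise; then $\lambda^n v=A^n v$, and the right-hand side is strictly positive because a positive matrix applied to a nonzero nonnegative vector has all entries positive. Hence $v>0$, giving the desired positive eigenvector for a positive real eigenvalue $\lambda$.

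For the ``leading'' property I would apply the same two steps to $A^{\top}$, which is aperiodic since $(A^{\top})^n=(A^n)^{\top}>0$, obtaining a strictly positive left eigenvector: $u>0$ with $u^{\top}A=\sigma u^{\top}$ for some $\sigma>0$. Pairing yields $\lambda\, u^{\top}v=u^{\top}(Av)=(u^{\top}A)v=\sigma\, u^{\top}v$, and since $u^{\top}v>0$ we get $\sigma=\lambda$. Now for an arbitrary eigenvalue $\mu\in\mathbb C$ with eigenvector $w\ne 0$, the relations $\mu w_i=\sum_j A_{ij}w_j$ and the triangle inequality give $\abs{\mu}\,\abs{w}\le A\abs{w}$ entrywise, where $\abs{w}$ denotes the vector of moduli. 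Multiplying on the left by $u^{\top}>0$ and using $u^{\top}A=\lambda u^{\top}$ gives $\abs{\mu}\,u^{\top}\abs{w}\le\lambda\,u^{\top}\abs{w}$, hence $\abs{\mu}\le\lambda$ since $u^{\top}\abs{w}>0$; thus $\lambda$ is the spectral radius. If one wants strict dominance over all other eigenvalues, assume $\abs{\mu}=\lambda$: then $u^{\top}(A\abs{w}-\lambda\abs{w})=0$ with $A\abs{w}-\lambda\abs{w}\ge 0$ and $u>0$ forces $A\abs{w}=\lambda\abs{w}$, and applying $A^n$ shows $\abs{w}>0$; the equality $\bigl\lvert\sum_j A^n_{ij}w_j\bigr\rvert=\sum_j A^n_{ij}\abs{w_j}$ with all $A^n_{ij}>0$ and all $\abs{w_j}>0$ then forces the $w_j$ to share a common argument, so $w$ is a positive real vector up to a unimodular scalar and $\mu=\lambda$.

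The existence part is essentially immediate once one notices that $\Delta$ is mapped into itself, and the positivity of the eigenvector is a one-line consequence of aperiodicity. I expect the only genuinely delicate point to be the last step — the equality-case analysis of the triangle inequality that rules out other eigenvalues of maximal modulus — which is precisely where aperiodicity is needed rather than mere nonnegativity; alternatively one could replace the Brouwer argument by a Collatz--Wielandt variational characterization of $\lambda$, but attaining the extremum there requires the same aperiodicity input.
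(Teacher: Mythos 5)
The paper does not prove this result; it is stated as the classical Perron--Frobenius theorem and used as background (along with Lind's and Thurston's converses), so there is no ``paper's own proof'' to compare against. Taken on its own merits, your argument is a correct and standard proof. The Brouwer fixed-point argument on the simplex (made admissible by first checking that aperiodicity rules out zero columns so that $x\mapsto Ax/\lVert Ax\rVert_1$ is well defined), the upgrade of $v\ge 0$ to $v>0$ via $A^n v=\lambda^n v$ with $A^n>0$, the matching of left and right eigenvalues through $u^{\top}v>0$, and the equality case of the triangle inequality to obtain strict dominance $\abs{\mu}<\lambda$ for $\mu\ne\lambda$ are all sound, and you correctly identify that the last step is the one genuinely requiring aperiodicity (mere irreducibility would only give $\lambda$ on the periphery, not strictly dominant, which is what is needed for the paper's subsequent claim that integer aperiodic matrices have Perron leading eigenvalues). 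One small point worth making explicit: in the equality-case analysis you pass from $\bigl\lvert\sum_j A^n_{ij}w_j\bigr\rvert=\sum_j A^n_{ij}\abs{w_j}$ for a \emph{fixed} $i$ to a common argument among the $w_j$; this is fine because the positive weights $A^n_{ij}$ do not alter arguments, so the common argument is independent of $i$, but it deserves a sentence. Your closing remark that a Collatz--Wielandt variational characterization could replace Brouwer is also accurate and is the route most textbooks take; the Brouwer route you chose is arguably more self-contained.
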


If $A$ consists only of integers, then the leading eigenvalue $\lambda$ is Perron. A Perron number $\lambda$ all of whose Galois conjugates lie in the annulus $\{z\in \mathbb{C} : \frac{1}{\lambda} \le |z| \le \lambda \}$ is called \emph{\textbf{bi-Perron}}. Dilatations of pseudo-Anosov maps are known to be bi-Perron, as shown by Fried \cite{Fr}. Doug Lind proved a converse to the integer version of Thm $1$ \cite{Li}. 

\begin{thm}
[Lind] Every Perron number, $\lambda$, is the leading eigenvalue of an integer aperiodic matrix.
\end{thm}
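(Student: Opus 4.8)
The plan is to prove the statement for a Perron number $\lambda$ of degree $d$ over $\mathbb{Q}$ by building an explicit aperiodic integer matrix directly from the companion matrix of $\lambda$'s minimal polynomial, then enlarging it. Let $p(x) = x^d - c_1 x^{d-1} - \cdots - c_d$ be the minimal polynomial of $\lambda$; its companion matrix $C$ has characteristic polynomial $p$ and hence leading eigenvalue $\lambda$, but $C$ need not be non-negative (the $c_i$ may be negative) nor aperiodic. So the first step is a change of basis: I would look for a vector $v$ with positive entries (or a lattice basis adapted to the positive eigendirection) so that, in the new $\mathbb{Z}$-basis, the matrix of multiplication-by-$\lambda$ on the order $\mathbb{Z}[\lambda]$ (or a carefully chosen finite-index subgroup or overgroup) becomes non-negative. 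The Perron condition $|\lambda| > |\lambda_i|$ for all conjugates $\lambda_i$ is exactly what makes the $\lambda$-eigendirection dominant, so iterating any linear map with characteristic polynomial $p$ pushes vectors toward the positive Perron eigenvector $w$; this is the key mechanism that will be exploited.

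The main steps, in order, would be: (1) Work in $V = \mathbb{Q}(\lambda)$ viewed as a $d$-dimensional $\mathbb{Q}$-vector space, with the linear operator $T = $ multiplication by $\lambda$. Fix the Perron eigenvector $w \in V \otimes \mathbb{R}$ (coordinates given by the real embedding sending $\lambda \mapsto \lambda$) and note $w$ has all positive coordinates in the power basis after possibly adjusting. (2) Choose a lattice $L \subset V$ invariant under $T$ (e.g. $\mathbb{Z}[\lambda]$, or $\lambda^{-N}\mathbb{Z}[\lambda]$) together with a $\mathbb{Z}$-basis $e_1, \dots, e_d$ all lying in the open positive cone $\mathcal{C}$ spanned by directions close to $w$; this is possible because the cone around $w$ is open and $L$ is a full lattice, so it meets any open cone. (3) Since $T$ contracts (projectively) toward the ray $\mathbb{R}_{>0} w$ — because $\lambda$ strictly dominates all conjugates — for a sufficiently high power $T^k$ we get $T^k e_i \in \mathcal{C}$ for every $i$; expressing $T^k e_i$ in the basis $\{e_j\}$ then gives a non-negative integer matrix $A$ with $\lambda^k$ as leading eigenvalue. (4) Finally replace $\lambda^k$ by $\lambda$: either note $\lambda$ is a root of $x^k - (\text{char poly stuff})$... more cleanly, apply the construction to $\lambda$ itself by taking $k$-th roots at the level of matrices, or cite that a Perron number and its powers generate the same problem via a block/companion trick — I would instead set up the cone argument so that the single matrix of $T$ (not $T^k$) in a cleverly chosen basis is already non-negative, avoiding the power. (5) Upgrade non-negative to aperiodic: enlarge $A$ to $A' = \begin{pmatrix} A & b \\ c^{\mathsf T} & * \end{pmatrix}$ or add a small number of auxiliary states with $+1$ entries arranged so the associated digraph becomes strongly connected and has a loop (aperiodic = primitive), while keeping the Perron root equal to $\lambda$; one standard move is to split one state or to add $\lfloor \lambda \rfloor$-many "reservoir" states feeding back, using that $\lambda > 1$. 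Checking that this enlargement does not change the spectral radius, or changes it only to another matrix still having $\lambda$ as its Perron root, requires care.

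The hard part will be Step (3)–(4): guaranteeing that one obtains a genuine $d\times d$ (or slightly larger) \emph{integer} matrix — not merely a real non-negative one — with leading eigenvalue exactly $\lambda$ and not some power $\lambda^k$, while simultaneously controlling primitivity. The tension is that making the basis vectors land inside the positive cone may force passing to $T^k$, but then the eigenvalue is $\lambda^k$; resolving this cleanly (either by a matrix $k$-th root that stays integral and non-negative, or by directly exhibiting a basis in which $T$ is non-negative, which uses the Perron strict-dominance more delicately) is the crux. I expect the argument to hinge on a quantitative version of "eventual positivity of $T^n$ relative to the cone around $w$," combined with an explicit bookkeeping of how many extra auxiliary coordinates are needed to absorb the error and make the digraph primitive.
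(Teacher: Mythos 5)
The paper does not prove this statement; it cites it from Lind \cite{Li} as background. So there is no ``paper's own proof'' to compare against, and I will simply assess your plan.

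Your framework is the right one (work in $\mathbb{Q}(\lambda)$, take $T=$ multiplication by $\lambda$, use the strict Perron dominance to contract toward the positive eigendirection $w$, express $T$ in a lattice basis lying in a cone around $w$), and you have correctly located the crux: making $T$ itself non-negative rather than some power $T^k$. But the resolution you gesture at --- ``set up the cone argument so the single matrix of $T$ in a cleverly chosen basis is already non-negative'' --- cannot in general be achieved with a $d$-element $\mathbb{Z}$-basis, and none of your proposed fallbacks (integral non-negative $k$-th roots, block tricks) is likely to go through. The missing idea is to give up on a $d\times d$ matrix. Because $\lambda$ strictly dominates its conjugates, there is a rational polyhedral, pointed, full-dimensional cone $K$ with $w\in\operatorname{int}(K)$ and $TK\subset\operatorname{int}(K)\cup\{0\}$ (this is the classical eventually-positive/Vandergraft observation you already invoke, but one needs a genuinely $T$-invariant cone, not merely eventual capture of $d$ basis vectors). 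Take a $T$-invariant lattice such as $L=\mathbb{Z}[\lambda]$. By Gordan's lemma the semigroup $L\cap K$ is finitely generated, say by $g_1,\dots,g_m$ with $m$ typically much larger than $d$. Since $Tg_i\in L\cap K$, write $Tg_i=\sum_j a_{ij}g_j$ with $a_{ij}\in\mathbb{Z}_{\ge 0}$; this $A=(a_{ij})$ is the matrix. Pairing with the dual Perron functional $\ell$ (positive on $K\setminus\{0\}$) gives $A u=\lambda u$ with $u=(\ell(g_j))_j>0$, so $\lambda$ is the spectral radius of $A$ --- with eigenvalue exactly $\lambda$, no power. Your step (5) is also too loose: bolting on ``reservoir'' states generically changes the spectral radius, and controlling this is not a small matter. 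In Lind's argument primitivity is extracted from the same cone setup (essentially from $T^N g_i$ being a strictly positive combination of all the $g_j$ for $N$ large, by $TK\subset\operatorname{int}K\cup\{0\}$), not by a separate enlargement. As written, your proposal is a reasonable outline but does not yet contain the two ideas that make the proof close: dropping the insistence on a $d\times d$ matrix via Gordan's lemma, and deriving aperiodicity from the invariant-cone picture rather than patching it on afterward.
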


Thurston proved a converse to Theorem $1$ more relevant to us. \cite{Th} 

\begin{thm}
[Thurston] Every Perron number, $\lambda$, is the leading eigenvalue of a non-negative integer matrix, $A$, satisfying:
\begin{itemize}
\item[(i)] In each column of $A$, the non-zero entries form one consecutive block,
\item[(ii)] There is a map $\phi:\{0,1,...,n\}\to \{0,1,...,n\}$ such that the entry $A_{ij}$ is odd if and only if $\min\{\phi(j-1),\phi(j)\} < i \le \max\{\phi(j-1),\phi(j)\}$.
\end{itemize}
\end{thm}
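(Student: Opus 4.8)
The plan is to produce the matrix $A$ not abstractly but as the transition matrix of a postcritically finite piecewise-linear self-map of an interval; any such matrix automatically satisfies (i) and (ii), so the theorem reduces to a realization statement in one-dimensional dynamics. Fix combinatorial intervals $I_1,\dots,I_n$ laid side by side to form $[0,n]$, with $I_j=[j-1,j]$, and consider a continuous PL map $f\co[0,n]\to[0,n]$ that is monotone on each $I_j$ apart from turns occurring only at integer points and satisfies $f(\{0,\dots,n\})\subseteq\{0,\dots,n\}$; let $\phi$ be the induced map on endpoints, $\phi(k)=f(k)$, and let $A_{ij}$ be the number of preimages in $I_j$ of a generic point of $I_i$. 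Then $f(I_j)$ is a connected union of the $I_i$, i.e.\ a single block, which is precisely (i); and (ii) is the elementary parity statement that a continuous arc from $\phi(j-1)$ to $\phi(j)$ crosses a generic level an odd number of times exactly when that level separates its two endpoints (entries $\ge 2$ occur when $f|_{I_j}$ folds back on itself). Hence it suffices to construct, for a given Perron $\lambda$, a map $f$ of this kind whose transition matrix has leading eigenvalue $\lambda$; once that is done, re-metrizing $[0,n]$ so that $I_i$ has length the $i$-th coordinate of a positive eigenvector of $A^{T}$ (which exists by Perron--Frobenius applied to the aperiodic matrix $A^{T}$) presents $f$ as a genuine constant-slope-$\lambda$ postcritically finite interval map, but this last step is cosmetic.

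To build $f$ I would start from Lind's theorem: fix an aperiodic non-negative integer matrix $M$ with leading eigenvalue $\lambda$, and pass to its directed multigraph $\Gamma$, whose edge shift has entropy $\log\lambda$. The aim is to lay $\Gamma$ out along a line -- order the vertices, put an interval next to each, and let $f$ carry interval $j$ across the intervals indexed by the out-neighbors of vertex $j$, with multiplicities read off the $j$-th column of $M$. The trouble is that in an arbitrary ordering these images need not be connected blocks, so one must first modify $\Gamma$ by a sequence of entropy-preserving combinatorial moves -- re-orderings of the vertices and elementary state-splittings, together with whatever further such modifications are needed -- so that every image becomes a single block; since state-splitting produces a conjugate edge shift, the leading eigenvalue stays $\lambda$ throughout (one may have to pass to a larger matrix). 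With that arranged, inserting turning points at integer values along any arcs that must fold yields a continuous $f$ with a well-defined $\phi$, and its transition matrix is the matrix $A$ demanded by the theorem. (One could instead try to build $f$ directly from the minimal polynomial of $\lambda$ via a $\lambda$-expansion, but when $\lambda$ is not a Parry number that is no easier.)

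The main obstacle is exactly this linearization: showing that the aperiodic integer matrix given by Lind can, after finitely many entropy-preserving moves, be brought into the linearly ordered, consecutive-block normal form encoded by (i)--(ii). Equivalently, one must realize the subshift of finite type of entropy $\log\lambda$ as the itinerary system of a postcritically finite interval map of the same entropy -- an equal-entropy realization, which is the delicate heart of the matter and is where explicit inductive control of the folding (kneading) data is required. Once that is in hand, everything else -- continuity, the parity condition (ii), and the passage to constant slope via the Perron eigenvector -- is routine bookkeeping.
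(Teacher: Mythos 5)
Your reformulation of (i) and (ii) is correct and is indeed what the conditions mean: (i) is continuity of the induced interval map (each column image is a connected union of cells), and (ii) is the parity bookkeeping that lets one recover the endpoint map $\phi$ from the incidence data. The paper itself says as much in Section~2, where it observes that Thurston's odd-block property is exactly the continuity of the map $h$ built from an eigenvector of $A^{T}$. So you have correctly reduced the theorem to: \emph{every Perron number is $\exp(h_{\mathrm{top}})$ of a postcritically finite multimodal interval map whose turning points and endpoint orbit are confined to a fixed finite set.}

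The problem is that this reduction \emph{is} the theorem, and you do not prove it. You propose to start from Lind's aperiodic matrix, pass to its directed multigraph, and apply ``re-orderings of the vertices and elementary state-splittings, together with whatever further such modifications are needed'' until the consecutive-block form appears. No argument is given that such a sequence of moves exists, terminates, or lands on a matrix that additionally admits a single compatible $\phi$ governing the parities of \emph{every} column simultaneously --- which is a global constraint that out-splitting and re-indexing have no obvious reason to produce. You flag this yourself (``the delicate heart of the matter''), which is an honest admission that the central step is missing. What remains is a plausible strategy for attacking the problem, not a proof. For the record, the paper does not prove this statement either; it cites Thurston's ``Entropy in dimension one,'' and Thurston's argument there is a direct, fairly intricate construction of the interval map controlled by the kneading/postcritical data for the given Perron number --- it does not factor through Lind's theorem and a symbolic-dynamics normal-form reduction as you propose.
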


\begin{defn*}
A non-negative integer matrix, $A$, satisfying properties $(i)$ and $(ii)$ above will be called an \emph{\textbf{odd-block}} matrix. We write $\phi(i)$ as $\phi_i$.
\end{defn*}

Thurston's example of an odd-block matrix is:

{\footnotesize
\[ \left( \begin{array}{ccccccc}
5 & 6 & 0 & 0 & 0 & 0 & 0\\
1 & 2 & 0 & 0 & 0 & 4 & 0\\
3 & 5 & 1 & 0 & 1 & 2 & 1\\
8 & 4 & 1 & 0 & 7 & 4 & 1\\
2 & 0 & 1 & 3 & 0 & 6 & 1\\
0 & 0 & 0 & 0 & 0 & 0 & 1\\
0 & 0 & 0 & 0 & 0 & 0 & 1\\
\end{array} \right)\]
}
 
{\footnotesize Here $(\phi_i)_{i=0}^7 = (0, 3, 2, 5, 4, 2, 2, 7)$. For instance, since $\phi_1=3$ and $\phi_2=2$, only the $3^{rd}$ entry of the second column is odd.}\\

The question we're trying to address is: Given a bi-Perron number $\lambda$, can we construct a compact surface $S_g$, and a pseudo-Anosov homeomorphism $\psi:S_g\to S_g$, whose dilatation factor is $\lambda$?\\

Our strategy is to enforce further conditions on an odd-block matrix, $M$, that are sufficient to guarantee that its leading eigenvalue is a pseudo-Anosov dilatation. We will first describe a general construction, possibly yielding a surface of infinite type, and then prove that our conditions are sufficient to ensure the surface constructed is of finite type.\\

Based on whether we want to construct an orientation preserving, or reversing map, different conditions are forced on the matrix. To keep track of this, we define a parameter $\boldsymbol{\epsilon \in \{1,-1\}}$. We will talk about the pair $(M,\epsilon)$ or just the matrix $M$ satisfying some conditions depending on whether it matters if we preserve/reverse orientation or not.\\

For now, we are only able to deal with matrices consisting entirely of $0$'s and $1$'s. Presumably one can generalize our construction to matrices with larger entries, but we don't know yet how to do so.\\

\begin{thm} 
If $\lambda$ is the leading eigenvalue of a nonsingular, aperiodic, odd-block, $\{0,1\}$-matrix $M$, such that $M$ satisfies the \emph{one-sided} condition, and $(M,\epsilon)$ satisfies the \emph{alignment} condition (defined in the section $3$), then\\ 
\begin{itemize}
\item[(i)] $\lambda$ is the dilatation of a pseudo-Anosov map, $\psi$, of a compact surface, $S_g$, of genus $g\le\frac{1}{2}\dim(M)$. \\
\item[(ii)] $\psi$ is orientation preserving (resp. reversing) when $\epsilon$ is 1 (resp. -1). \\
\item[(iii)] When $g=\frac{1}{2}\dim(M)$ a basis for $H_1(S_g)$ may be chosen so that the action of $\psi$ on $H_1(S_g)$ is given by $M$. $\chi(M)$ is palindromic or anti-palindromic in this case (resp. whether $\epsilon = 1$ or $-1$).
\end{itemize}
\end{thm}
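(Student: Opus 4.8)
The plan is to realize $\lambda$ as a pseudo-Anosov dilatation by constructing, directly from the combinatorial data of the odd-block matrix $M$ and its associated map $\phi$, a singular flat surface $\Sigma$ carrying a transverse pair of measured foliations together with a self-homeomorphism $\psi$ scaling them by $\lambda$ and $1/\lambda$; the one-sided and alignment conditions then enter to guarantee that $\Sigma$ is closed and that its foliations have only finitely many singular points of finite order. In detail, let $v$ and $w$ be the positive right and left Perron--Frobenius eigenvectors of $M$ for $\lambda$, which exist by the Perron--Frobenius theorem (Theorem~1) because $M$ is aperiodic. I attach to each index $i$ a Euclidean rectangle $R_i$ of width $w_i$ and height $v_i$, foliated by horizontal and vertical lines, each with a chosen co-orientation; I glue the $R_i$ along subarcs of their horizontal edges in the pattern dictated by $\phi$, which records how the $\psi$-images of the rectangles are stacked, and I define $\psi$ to carry $R_j$ affinely across $R_i$ exactly $M_{ij}$ times, stretching the horizontal direction by $\lambda$ and contracting the vertical direction by $1/\lambda$, with the co-orientations of the resulting strips prescribed by condition~(ii) (the one governing which entries $A_{ij}$ are odd, i.e.\ whether $\min\{\phi_{j-1},\phi_j\}<i\le\max\{\phi_{j-1},\phi_j\}$). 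The parameter $\epsilon$ fixes one global sign convention in these gluings so that $\psi$ is globally orientation preserving when $\epsilon=1$ and reversing when $\epsilon=-1$.

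\textbf{Consistency and parts (i)--(ii).} One verifies that the odd-block structure makes all of this well defined: the consecutive-block condition~(i) lets the $\psi$-images of the $R_j$ be realized as full-height sub-rectangles stacked contiguously, so vertical edges match up; condition~(ii) makes the co-orientation data consistent on overlapping strips; and the one-sided condition is used to control the identifications along one family of horizontal edges so that the horizontal foliation extends across every seam. The output is an honest, but a priori possibly infinite-type, surface $\Sigma$ with a homeomorphism $\psi$ that by construction preserves both foliations with dilatation $\lambda$. Hence, granting that $\Sigma$ is closed with foliations having finitely many singular points of the right type, $\psi$ is pseudo-Anosov with dilatation $\lambda$, which is~(i); and (ii) is read off from the $\epsilon$-convention.

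\textbf{Finiteness: the main obstacle.} The heart of the argument is to show that the alignment condition forces $\Sigma$ to be of finite type. The rectangles assemble into a \emph{closed} surface precisely when the remaining horizontal-edge identifications terminate, which amounts to showing that the forward orbits of the rectangle corners under the map induced by $\phi$, together with the associated return combinatorics, are eventually periodic; I expect the alignment condition to be designed exactly so that these orbits are finite, so that the corners collect into finitely many cone points of total angle an integer multiple of $\pi$ with no infinite spiralling of leaves. Carrying out the bookkeeping of how corners get identified, and ruling out such spiralling, is where the real work lies. Once finiteness is established, $H_1(S_g;\mathbb{Z})$ is a quotient of a free abelian group of rank $n$ with generators naturally attached to the $n$ rectangles — equivalently, to the branches of the invariant train track carrying $\psi$ — so $2g=\operatorname{rank}H_1(S_g)\le n=\dim M$, i.e.\ $g\le\tfrac12\dim M$; this can also be seen from an Euler-characteristic count over the cone points.

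\textbf{Part (iii).} Suppose $g=\tfrac12\dim M$. Then the rank-$n$ group above surjects onto $H_1(S_g;\mathbb{Z})\cong\mathbb{Z}^n$ and is therefore an isomorphism, so those $n$ classes form a basis; from the definition of $\psi$ one reads off $\psi_*=M$ in this basis, the signs coming from condition~(ii) being absorbed into the orientations of the basis curves. Finally, $\psi$ is a homeomorphism and $M$ is nonsingular, so $\psi_*=M$ lies in $GL_n(\mathbb{Z})$, and $\psi$ preserves the algebraic intersection form when $\epsilon=1$ and negates it when $\epsilon=-1$; that is, $M^{\mathsf{T}}JM=\epsilon J$ for the nonsingular skew-symmetric intersection matrix $J$. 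The standard argument for (anti-)symplectic matrices then forces the characteristic polynomial $\chi(M)$ to be palindromic when $\epsilon=1$ and anti-palindromic when $\epsilon=-1$, completing the proof.
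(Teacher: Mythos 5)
Your outline correctly identifies the general shape of the construction — assemble Euclidean rectangles with widths and heights taken from the left and right Perron–Frobenius eigenvectors, stack them according to $\phi$, and define an affine map stretching horizontally by $\lambda$ and contracting vertically by $1/\lambda$ — and your part (iii) argument via $M^{\mathsf T}JM=\epsilon J$ forcing $\chi(M)$ to be (anti-)palindromic is a correct way to finish once $\psi_*=M$ is known. But there are two genuine gaps.

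First, and most seriously, the finiteness argument — which you yourself flag as ``where the real work lies'' — is not actually given. You say you ``expect the alignment condition to be designed exactly so that these orbits are finite,'' but that is the entire theorem. The paper's actual argument is concrete: the alignment condition (a) forces the folded endpoints of each horizontal boundary edge $H_i$ to land at the \emph{center} of $H_{\phi(i)}$, so each $H_i$ collapses to a single cone point $Q_i$ of angle $\pi$; condition (b) makes the map continuous across the seams $R_i\cap R_{i+1}$ at noncritical levels; and for the vertical edges one observes that the pieces not identified under any forward iterate form $n-1$ closed intervals, which must be degenerate because $f_0$ expands vertical length by $\lambda>1$ — so they are $n-1$ points glued into a single cone point $Q$ of angle $(n-1)\pi$. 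No ``eventual periodicity of corner orbits'' or ruling out of spiralling is needed; the expansion factor kills everything at once. Your sketch doesn't contain this mechanism and so doesn't actually prove the surface is compact.

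Second, you are missing the branched double cover, which is where the surface $S_g$ and the genus bound actually come from. The quotient $P=P_0/\!\sim$ is not $S_g$; an Euler characteristic count over the cone points shows $P$ is a \emph{sphere} with $n+1$ cone points of angle $\pi$ and one of angle $(n-1)\pi$. One then takes the double cover branched over the odd-angle cone points; Riemann–Hurwitz gives $g=\lfloor n/2\rfloor$, and the map $f$ lifts to $\psi$. Without this step, your claim that ``$H_1(S_g)$ is a quotient of a free abelian group of rank $n$ with generators attached to the $n$ rectangles'' is unsupported — in fact it's exactly the arcs between consecutive cone points $Q_i,Q_{i+1}$ on the sphere, lifted to closed loops on the double cover, that furnish the basis of $H_1(S_g)$ in which $\psi_*=M$. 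So both the genus bound in (i) and the homology statement in (iii) rest on a construction your proposal omits.
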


In section $2$, following \cite{Th}, we define an intermediate interval map associated to an odd-block matrix. In section $3$, we define further conditions on $M$ that we call \emph{one-sided} and \emph{alignment} conditions. In section $4$ we describe a construction of $S_g$ and $\psi$. In section $5$, we prove theorem 4. \\

\section{Interval map $\boldsymbol{h}$ associated to $M$}

Throughout sections $2, 3$ and $4$, let $M_{n\times n}$ be a nonsingular, aperiodic, odd-block matrix consisting entirely of $0$'s and $1$'s. Further, let $\epsilon\in\{-1,1\}$ be fixed. Let $\lambda$ be the positive leading eigenvalue of $M$. Recall that $M$, being odd-block, comes with a map $\phi:\{0,1,...,n\}\to\{0,1,...,n\}$. Before we describe our conditions on $M$, we need to define a convenient map $\boldsymbol{h_M=h:[0,1]\to [0,1]}$ associated to $M$.\\

Say $\overrightarrow{w}=(w_1,w_2,...,w_n)$ is a positive eigenvector of $M^T$ corresponding to $\lambda$. Normalize $\overrightarrow{w}$ so that $\sum_{i=1}^nw_i=1$. Partition $[0,1]$ as \{{\small $0=x_0<x_1<...<x_n=1$}\} such that $x_i-x_{i-1} = w_i$. Let $\boldsymbol{I_i} := (x_{i-1},x_i)$. Now define $h$ by sending $x_i$ to $x_{\phi_i}$, and connecting linearly in between. More precisely, for $x\in I_i$, define $h(x) = x_{\phi_{i-1}} + \frac{x_{\phi_i}-x_{\phi_{i-1}}}{x_i-x_{i-1}}(x-x_{i-1})$.
{\footnotesize
\begin{figure}[H]
\[ \left( \begin{array}{cccccc}
0 & 0 & 0 & 0 & 0 & 1\\
1 & 1 & 0 & 0 & 0 & 1\\
0 & 1 & 0 & 0 & 0 & 1\\
0 & 0 & 1 & 0 & 0 & 1\\
0 & 0 & 1 & 0 & 1 & 0\\
0 & 0 & 0 & 1 & 1 & 0\\
\end{array} \right) \]
\begin{center}
\includegraphics[scale=0.7]{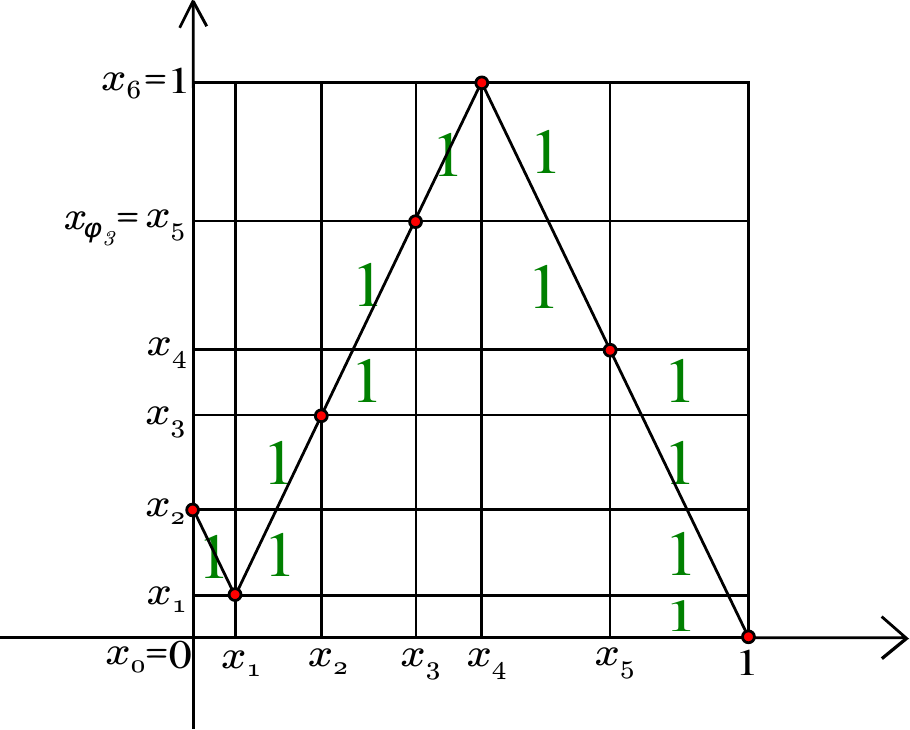}
\end{center}
\caption{\small An example of $M$ and the associated map $h$. Here $(\phi_i)_{i=0}^6 = (2, 1, 3, 5, 6, 4, 0),\, \chi(M) = (x^2-x+1)*(x^4-x^3-2x^2-x+1),\, \lambda \approx 2.081$. $h$ ``resembles'' the reflection of $M$ vertically. }
\end{figure}
}

$h$ is postcritically finite since its postcritical set is contained in the set $X=\{x_0,...,x_n\}$ on which it acts by changing subscripts via $\phi$. This is a permutation: otherwise some $x_i$ will not be in the image of $h$, so no column of $1$s will begin or end between rows $i$ and $i+1$ of $M$, which implies these rows will be the same, leading to $\det(M)=0$.\\

$h$ has constant slope, $\lambda$, in absolute value. This follows precisely from setting interval widths according to the eigenvector $\overrightarrow{w}$ of $M^T$, corresponding to $\lambda$. Finally, we mention that the \emph{odd-block} property of $M$, found by Thurston, corresponds to \emph{continuity} of $h$ \cite[Prop. 3.1]{Th}.

\section{The \textbf{one-sided} and \textbf{alignment} conditions}
Call an $x_i\in X$ a \emph{\textbf{noncritical}} point of $h$ if it is not a (topological) critical point.
\vspace{7mm}
\begin{defn*}
$M$ is said to satisfy the \textbf{\emph{one-sided condition}} if: For any $x_i\in X$, the horizontal line through $h(x_i)$ does not intersect the graph of $h$ on both sides of $x_i$. i.e. \[\forall\, x_i\in X, \,\,\nexists\,\, x, y \in [0,1], \text{ with } x<x_i<y \text{ such that } h(x)=h(x_i)=h(y).\] 
\end{defn*}
\vspace{7mm}
With $M$ satisfying this condition, if the horizontal line through $x_i$, intersects the graph of $h$ on the right (left respectively) of $x_{\phi^{-1}(i)}$, define the \emph{\textbf{alignment at}} $\boldsymbol{x_i:=\alpha(i)} := 1$ ($-1$ respectively). The alignment function $\alpha$ is thus defined on a subset of $\{1,2,...,n-1\}$. It may not be defined at some $x_i$.\\

Recall that $\epsilon=\pm1$ keeps track of whether we're trying to construct an orientation preserving or reversing map.
\vspace{7mm}
\begin{defn*}
$(M,\epsilon)$ is said to satisfy the \textbf{\emph{alignment condition}} if the alignment function $\alpha$ can be extended to a function $\alpha:\{1,2,...,n-1\}\to\{1,-1\}$ satisfying:
\begin{itemize}
\item[(a)] For critical $x_i, \,\,\alpha(i) = \begin{cases}
      \hfill -\epsilon \hfill & \text{ if $x_i$ is a local max.} \\ 
      \hfill +\epsilon \hfill & \text{ if $x_i$ is a local min.} \\
  		\end{cases}$
\item[(b)] For noncritical $x_i,\,\,\alpha(i) = \begin{cases}
      \hfill +\epsilon\,\alpha(\phi_i) \hfill & \text{ if $h'(x_i)>0$.}\\
      \hfill -\epsilon\,\alpha(\phi_i) \hfill & \text{ if $h'(x_i)<0$.}\\
  		\end{cases}$
\end{itemize}
\end{defn*}
\vspace{7mm}
We will motivate these definitions in the next section where we describe our main construction, for now we merely say that our construction can be adapted for matrices that don't satisfy one-sided and alignment conditions, but in such cases, the surface constructed is of infinite type. It is these conditions that ensure finiteness of the surface we construct. \\

Our construction without one-sided and alignment conditions yields a family of generalized pseudo-Anosov maps in the sense of de Carvalho-Hall  \cite{CH}. \cite{CH} constructed an infinite family of such maps from unimodal piecewise linear maps of the interval, and our construction is for multimodal maps. \\

We may assume without loss of generality that each cycle of the permutation $\phi$ contains a critical point of $h$. This is because if there was a cycle consisting entirely of noncritical points, we could simply remove them from our partition of $[0,1]$ and get a smaller matrix with the same leading eigenvalue. The smaller matrix would still consist of only $0$s and $1$s, as these points are noncritical. Finally, as this process doesn't change $h$, the smaller matrix would also satisfy the one-sided and alignment conditions. \\

With this assumption, $X=\{x_0,...,x_n\}$ is in fact the postcritical set of $h$. Since we don't lose any eigenvalues $\lambda$ this way, we make this assumption in what follows. For this, define a $\emph{\textbf{minimally odd-block}}$ matrix $M$, as an odd-block matrix that satisfies this assumption.

\section{The construction}

In this section, we construct a surface and a homeomorphism corresponding to $(M,\epsilon)$. The one-sided and alignment conditions will imply that the surface is compact and the homeomorphism pseudo-Anosov.\\

$M$, and its transpose, $M^T$, both have positive eigenvectors (say, $\overrightarrow{v} = (v_i)$ and $\overrightarrow{w} = (w_i)$), corresponding to their leading eigenvalue $\lambda$. Normalize these so that $\sum_{i=1}^nw_i=\sum_{i=1}^nv_i=1$.\\

Partition $[0,1]\times[0,1]$ into an $n\times n$ grid, with widths given by the entries of $\overrightarrow{v}$ (going left to right), and heights given by the entries of $\overrightarrow{w}$ (going from top to bottom). The vertical subintervals are the $I_i$. Shade the $\{i,j\}^{th}$ rectangle of the partition iff $M_{ij}$=1. Call the closed shaded region $P_0$.\\
{\footnotesize
\begin{figure}[H]
\[ \left( \begin{array}{cccccc}
0 & 0 & 0 & 0 & 0 & 1\\
1 & 1 & 0 & 0 & 0 & 1\\
0 & 1 & 0 & 0 & 0 & 1\\
0 & 0 & 1 & 0 & 0 & 1\\
0 & 0 & 1 & 0 & 1 & 0\\
0 & 0 & 0 & 1 & 1 & 0\\
\end{array} \right) \]

\vspace{1mm}
\hspace{-7mm}
\includegraphics[width=6cm, height=6cm]{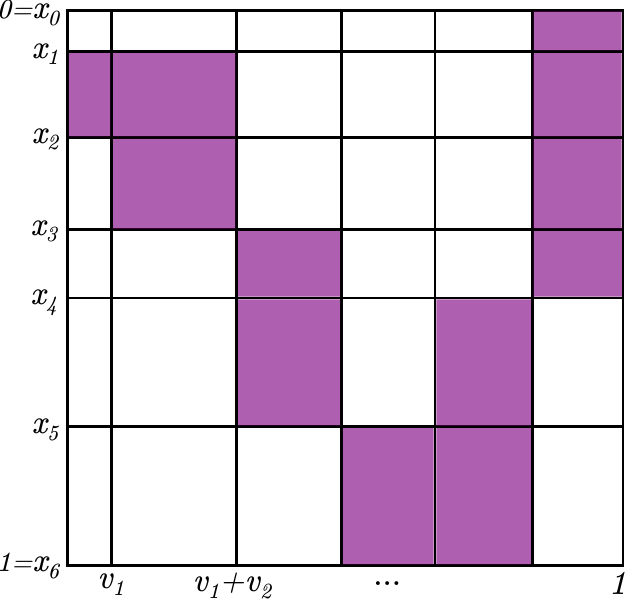}
\caption{The shaded region $P_0$ ``resembles" $M$, with widths of columns given by the entries of the positive eigenvector of $M$, and heights of rows given by the entries of the positive eigenvector of $M^T$, both corresponding to the leading eigenvalue $\lambda$.}
\end{figure}
}

We wish to map the shaded region in the $i^{th}$ column to the shaded region in the $i^{th}$ row, stretching horizontally by $\lambda$ and shrinking vertically by $\lambda$. Widths and heights were chosen as eigenvectors of $M$ and $M^T$ to make this possible. But first, if there are multiple rectangles in a row, we ``draw them together" by identifying their inner edges, as shown in the following figure. We denote by $R_i$ the rectangle we obtain this way from combining the shaded rectangles in the $i^{th}$ row, and by $C_i$ the shaded region in the $i^{th}$ column of $P_0$.\\

\begin{figure}[H]
\includegraphics[scale=1]{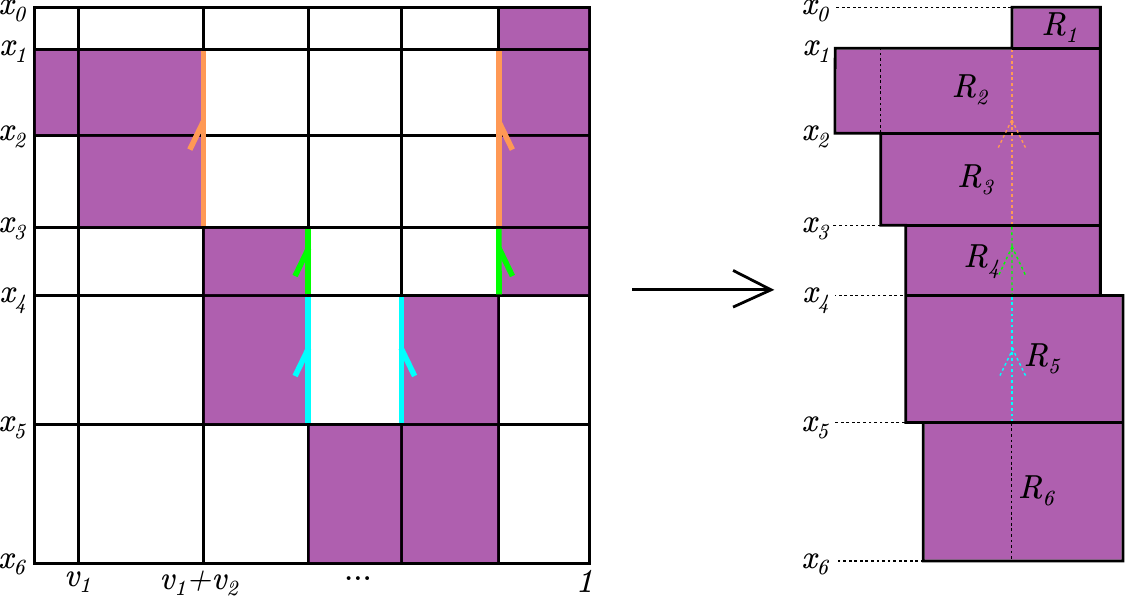}
\caption{We ``draw together" all shaded blocks in each row to get $R_i$, and we align $R_i, R_{i+1}$ on the left or right according to whether $\alpha(i) =-1$ or $1$ (respectively). We keep track of where the $i^{th}$ column of $P_0$, $C_i$, goes under this ``drawing together".}
\end{figure}

Each pair of adjacent rows are aligned either on the left or the right. Here we use our alignment function $\alpha$ to decide where to align: If $\alpha(i)=-1$, align rows $R_i, R_{i+1}$ on the left, otherwise align them on the right.\\

Now define, $f_0:P_0\to P_0$ as follows: On the $i^{th}$ row, $R_i$, stretch vertically by $\lambda$ and shrink horizontally by $\lambda$. Then send this scaled rectangle to the $i^{th}$ column, $C_i$, by a translation preceded by:

{\small
\begin{itemize}
\item The Identity, \hspace{4.6cm} case $\epsilon=1$ and $h'>0$ on $I_i$\\
\item Rotation by $180^\circ$, \hspace{3.9cm} case $\epsilon=1$ and $h'<0$ on $I_i$\\
\item A reflection about a vertical line, \hspace{1.3cm} case $\epsilon=-1$ and $h'>0$ on $I_i$\\
\item A reflection about a horizontal line, \hspace{.9cm} case $\epsilon=-1$ and $h'<0$ on  $I_i$\\
\end{itemize}
}

\begin{figure}[H]
\includegraphics[scale=.9]{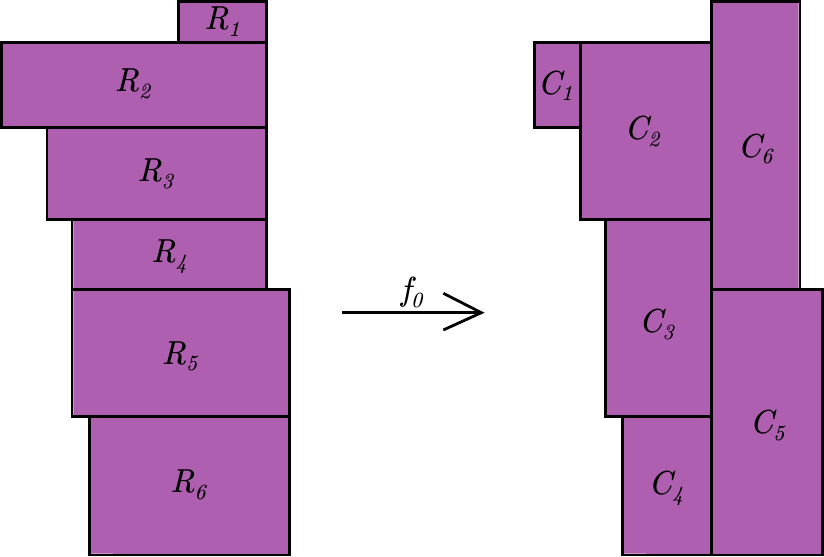}
\caption{We define $f_0$ by sending the $i^{th}$ row of $P_0$, $R_i$, to the $i^{th}$ column, $C_i$, affinely, according to the rules above. In this example, $\epsilon=1$, and $R_1, R_5, R_6$ are rotated by $\pi$ before being scaled and sent to the $C_i$. }
\end{figure}

As it is, $f_0$ is only a relation and not a well-defined function (on $R_i\cap R_{i+1}$). We define an equivalence relation ($\sim$) on the boundary of $P_0$ as follows: If a horizontal edge between the $R_i$ is sent to two different edges, identify the images, and all their iterates. For the identification of vertical edges, if $f_0^{\circ n}(x)=f_0^{\circ n}(y)$, for some $n>0$, identify $x$ to $y$. After the identifications, $f_0$ factors through as a homeomorphism, $f$, of $P:=P_0/\sim$, a polygon with identifications on the boundary.\\

\section{Proof of Theorem 4}
We are now ready to prove our main theorem.\\
\begin{thm*} 
If $\lambda$ is the leading eigenvalue of a nonsingular, aperiodic, odd-block, $\{0,1\}$-matrix $M$, such that $M$ satisfies the \emph{one-sided} condition, and $(M,\epsilon)$ satisfies the \emph{alignment} condition, then\\ 
\begin{itemize}
\item[(i)] $\lambda$ is the dilatation of a pseudo-Anosov map, $\psi$, of a compact surface, $S_g$.\\
\item[(ii)] When $M$ is an $n\times n$ minimally odd-block matrix, $g = \floor{n/2}$, and otherwise $g\le n/2$.\\
\item[(iii)] $\psi$ is orientation preserving (resp. reversing) when $\epsilon$ is 1 (resp. -1). \\
\item[(iv)] When $g=\frac{1}{2}\dim(M)$ a basis for $H_1(S_g)$ may be chosen so that the action of $\psi$ on $H_1(S_g)$ is given by $M$. $\chi(M)$ is palindromic or anti-palindromic in this case (resp. whether $\epsilon = 1$ or $-1$).
\end{itemize}
\end{thm*}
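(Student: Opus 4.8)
The plan is to trace through the construction of Section 4 and verify each clause in turn, using the one-sided and alignment conditions precisely where they are needed to guarantee finiteness and the pseudo-Anosov property. First I would analyze the quotient $P = P_0/\!\sim$ and the induced map $f$. The map $f_0$ sends each row $R_i$ affinely to the column $C_i$, stretching by $\lambda$ in one direction and contracting by $\lambda$ in the other; the horizontal and vertical foliations of $P_0$ by line segments are therefore preserved (up to the rotations/reflections prescribed by $\epsilon$ and the sign of $h'$ on $I_i$), with transverse measures scaled by $\lambda^{\pm 1}$. So on the interior of $P$, $f$ is already affine pseudo-Anosov; what must be checked is (1) that the identifications $\sim$ are consistent — i.e. $f_0$ really does factor through a homeomorphism $f$ of $P$ — and (2) that $P$ is a surface of finite type, equivalently that only finitely many boundary identifications are forced and the resulting singularities are cone points of finite order with the standard $k$-prong structure. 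Clause (i) then follows once we know the ``generalized pseudo-Anosov'' of de Carvalho–Hall is genuinely pseudo-Anosov, which is exactly the finiteness statement.

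For the finiteness of identifications, the key is that the horizontal edges between $R_i$ and $R_{i+1}$ sit over the point $x_i \in X$, and $X$ is the (finite) postcritical set of $h$ since $M$ is minimally odd-block. The one-sided condition says the horizontal line through $h(x_i)$ meets the graph of $h$ on only one side of $x_{\phi^{-1}(i)}$; translated into the picture, this means that when we ``draw together'' the shaded rectangles in a row and align successive rows via $\alpha$, the horizontal edge over $x_i$ is glued to at most one other edge, and the alignment function $\alpha$ being globally defined and consistent under the recursion in part (b) of the alignment condition is exactly what makes these gluings propagate consistently under iteration of $f_0$ (that recursion is the condition that the image of an edge is aligned the same way its preimage is, so no contradictory identifications appear). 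Thus the horizontal identifications close up after finitely many steps, the vertical identifications are governed by the finite orbit structure of $h$ on $X$ in the same way, and $P$ is a compact surface with a finite singular set; each singularity is a finite union of sectors of the horizontal/vertical foliation, giving a $p$-pronged singularity, and $f$ permutes these, so $f$ is pseudo-Anosov with dilatation $\lambda$. I expect this verification — carefully bookkeeping which edges get identified and showing the alignment recursion prevents infinite regress — to be the main obstacle; it is the technical heart of the theorem and the only place the hypotheses are really used.

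For clause (ii), I would count: $P_0$ is built from $n$ rows, and after the identifications one gets an Euler characteristic computation. The polygon $P$ before identification is a disk (Euler characteristic $1$); each pair of identified edges and each identified vertex orbit changes $\chi$ in a controlled way, and the alignment setup is arranged so that generically the vertical edges pair up to contribute $n$ edges, the horizontal edges contribute another controlled count, and one reads off $\chi(S_g) = 2 - 2g$ with $g = \lfloor n/2\rfloor$ in the minimally odd-block case (the floor appearing according to the parity of $n$, i.e. whether there is a leftover boundary component or an odd prong count). In the non-minimal case the removed noncritical points only decrease the complexity, giving $g \le n/2$. I would make this precise via a Riemann–Hurwitz / edge-vertex-face count on the singular flat structure, being careful about how many singularities lie on what would have been $\partial P_0$.

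For clauses (iii) and (iv): orientation behavior is immediate from the construction, since in the $\epsilon = 1$ cases $f_0$ is built from translations and $180^\circ$ rotations (orientation preserving) and in the $\epsilon = -1$ cases from reflections (orientation reversing), and these local rules patch to a global statement on $P$. For clause (iv), when $g = n/2$ (so no boundary is created and the $n$ vertical intervals $I_i$, pushed into $P$, give $2g = n$ classes), I would exhibit the $I_i$ (suitably oriented and capped) as a basis of $H_1(S_g; \mathbb{Z})$; the defining relation $f(R_i) = C_i$ together with the fact that $C_i$ is the union of the $I_j$ with $M_{ij} = 1$ shows $\psi_* [I_i] = \sum_j M_{ij} [I_j]$, i.e. the action on $H_1$ is $M$ (up to the sign bookkeeping from the rotations/reflections, which is where the $\epsilon$-dependence enters). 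Finally, $\psi$ preserves a symplectic (for $\epsilon=1$) or anti-symplectic (for $\epsilon=-1$) intersection form on $H_1$, so $M$ is conjugate to $(M^{-1})^T$ up to sign, forcing $\chi(M)(x) = \pm x^n \chi(M)(1/x)$, i.e. $\chi(M)$ is palindromic when $\epsilon = 1$ and anti-palindromic when $\epsilon = -1$.
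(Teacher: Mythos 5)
Your overall plan — trace the construction, use the alignment condition to show the identifications are consistent, use finiteness of the postcritical set to bound the singular locus, then read off the pseudo-Anosov structure — matches the paper's intent. But there is a concrete missing idea that makes your treatment of (i) and (ii) fail as stated.

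The quotient $P = P_0/\!\sim$ is \emph{not} the surface $S_g$, and your Euler-characteristic count ``$\chi(S_g)=2-2g$ with $g=\lfloor n/2\rfloor$ read off from the identified polygon'' is wrong for it. The paper shows that after folding each horizontal boundary edge $H_i$ in half (producing a cone point $Q_i$ of angle $\pi$ at its midpoint) and tracking the vertical-edge identifications, the pieces of $\partial P_0$ not identified under any forward iterate of $f_0$ form $n-1$ closed intervals; since $f_0$ expands by $\lambda>1$, these must each be a single point, and since each complementary arc is folded in half, all $n-1$ of them collapse to a single cone point $Q$ of angle $(n-1)\pi$. One then computes $\chi(P)=2$: the quotient is a \emph{sphere}, not a genus-$g$ surface. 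The cone points $Q_i$ of angle $\pi$ would correspond to $1$-pronged singularities, which are not permitted for a pseudo-Anosov on a closed surface, so this sphere cannot directly carry the map. The paper's essential further step, which your proposal omits entirely, is to pass to the double cover of $P$ branched over the odd cone points (the $Q_i$ always, and $Q$ exactly when $n$ is even): Riemann--Hurwitz then gives $g=n/2$ for $n$ even and $g=(n-1)/2$ for $n$ odd, i.e. $g=\lfloor n/2\rfloor$, and the foliations and the map $f$ lift to $S_g$ to produce $\psi$ with genuine $k$-pronged singularities. Your claim that ``the $I_i$, pushed into $P$, give $2g=n$ classes in $H_1$'' cannot be right since $H_1(P)=0$; the paper's basis consists of the \emph{lifts} to the double cover of the arcs joining $Q_i$ to $Q_{i+1}$, which only become closed loops upstairs.

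A secondary weakness: your argument for finiteness (``the alignment recursion prevents infinite regress'') doesn't actually establish that the vertical-edge identifications terminate. The mechanism in the paper is the hyperbolic-collapsing argument above — unidentified arcs must be fixed in length under a $\lambda$-expanding map and hence are points — not a combinatorial stabilization of the alignment data. Your reading of the one-sided condition (``the horizontal edge over $x_i$ is glued to at most one other edge'') also isn't quite what it guarantees; it guarantees the alignment function $\alpha$ is well-defined in the first place, which in turn makes $f_0(H_i)$ and $H_{\phi(i)}$ share a center $Q_{\phi(i)}$, so that folding the $H_i$ in half is compatible with $f_0$. Parts (iii) and (iv) of your sketch are fine in outline once the branched double cover is in place (the symplectic/anti-symplectic argument for (anti-)palindromicity of $\chi(M)$ is a legitimate and slightly more explicit route than the paper takes), but as written they presuppose a surface $S_g$ and an $H_1$-basis that your version of the construction never produces.
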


\begin{proof}
Let $M$ and $\epsilon$ {\small$\in\{-1,1\}$} be given and satisfy the conditions of the theorem. Construct the interval map $h$ as in section $2$, and construct $P_0$ and $f_0$ as in section $4$. We will study the identifications $\sim$ induced by $f_0$ on $P_0$ and show that $P=P_0/\sim$ is homeomorphic to a sphere with finitely many cone points.\\

First consider the horizontal edges of the $R_i$. Define $E_i$ to be the horizontal edge at height $x_i$, i.e. $\boldsymbol{E_i :=} \{(x,y)\in[0,1]^2:y=x_i\}\cap P_0$. $E_i$ is sent to $E_{\phi(i)}$.\\

If $x_i$ is a noncritical point of $h$, the interior of $R_i\cup R_{i+1}$ is mapped homeomorphically to its image: part $(b)$ of the alignment condition ensures that the alignments at $E_i$ and $E_{\phi(i)}$ are compatible, so as to make $f_0$ continuous on the intersection $R_i\cap R_{i+1}$, as illustrated in the figure below. Recall part $(b)$ of the alignment condition: \\

For noncritical $x_i, \,\,\alpha(i) = 
\begin{cases}
      \hfill +\epsilon\,\alpha(\phi_i) \hfill & \text{ if $h'(x_i)>0$.}\\
      \hfill -\epsilon\,\alpha(\phi_i) \hfill & \text{ if $h'(x_i)<0$.}
 \end{cases}$

\begin{figure}[H]
\centering
\def\svgwidth{0.75\columnwidth}
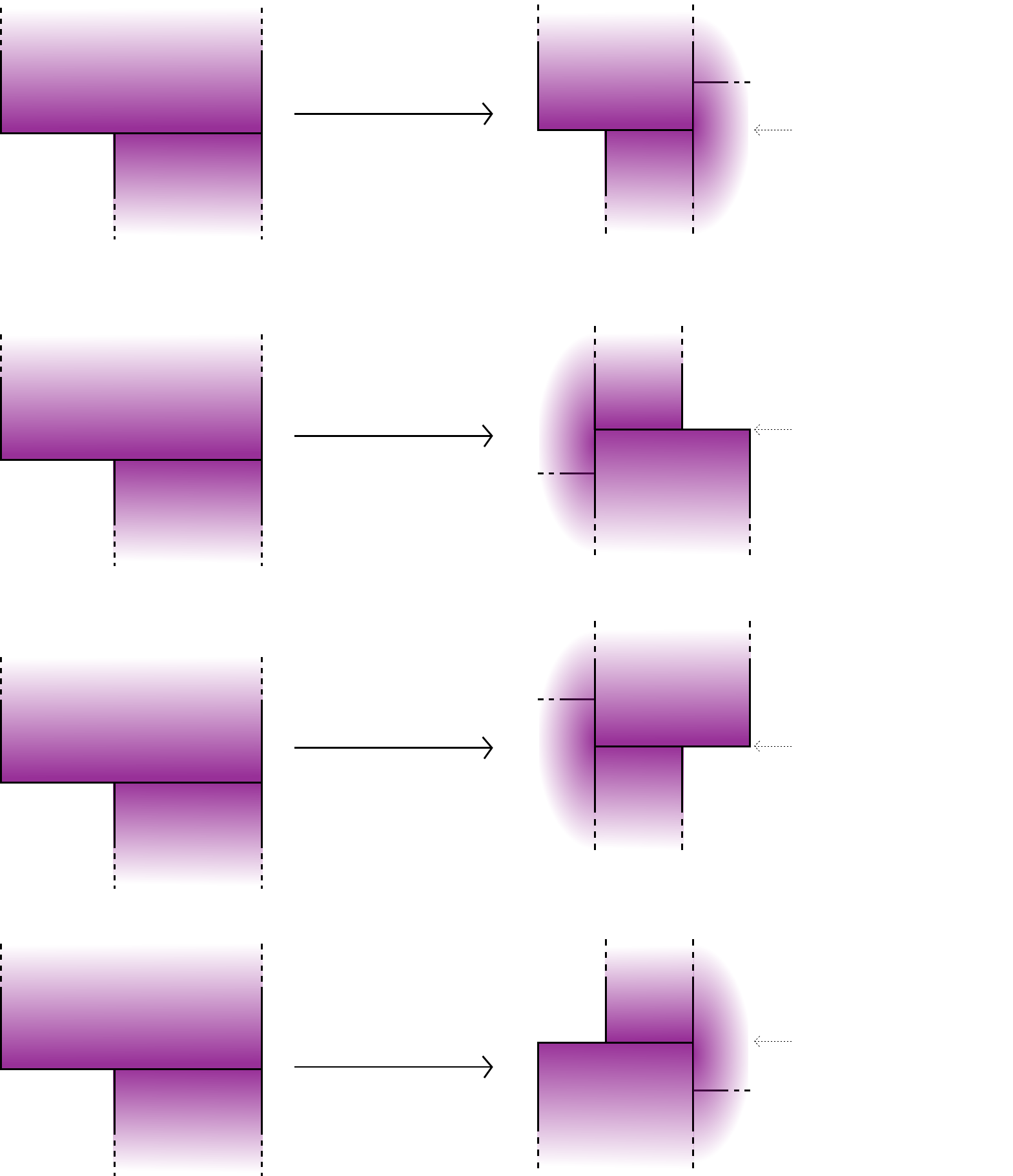
\caption{Illustrating the cases that occur when $x_i$ is noncritical and $\alpha(i)=1$ (i.e. when $R_i$ and $R_{i+1}$ are aligned on the right). Alignment condition $(b)$ ensures that the alignment at $E_{\phi(i)}$ is so that $R_i\cup R_{i+1}$ is mapped homeomorphically whether it's rotated, reflected, or not.}
\end{figure}

If $x_i$ is a critical point of $h$, $E_i$ is sent to the boundary of $P_0$. $R_i\cap R_{i+1}\subset E_i$ is sent to extreme parts of $E_{\phi(i)}\cap \partial P_0$, ($f_0$ is not a well defined function on $R_i\cap R_{i+1}$). Part $(a)$ of the alignment condition ensures that extreme sides of the horizontal edge $E_{\phi(i)}\cap \partial P_0$ are glued, and the condition was set precisely to make this happen. One case is illustrated below.\\

\begin{figure}[H]
\centering
\def\svgwidth{\columnwidth}
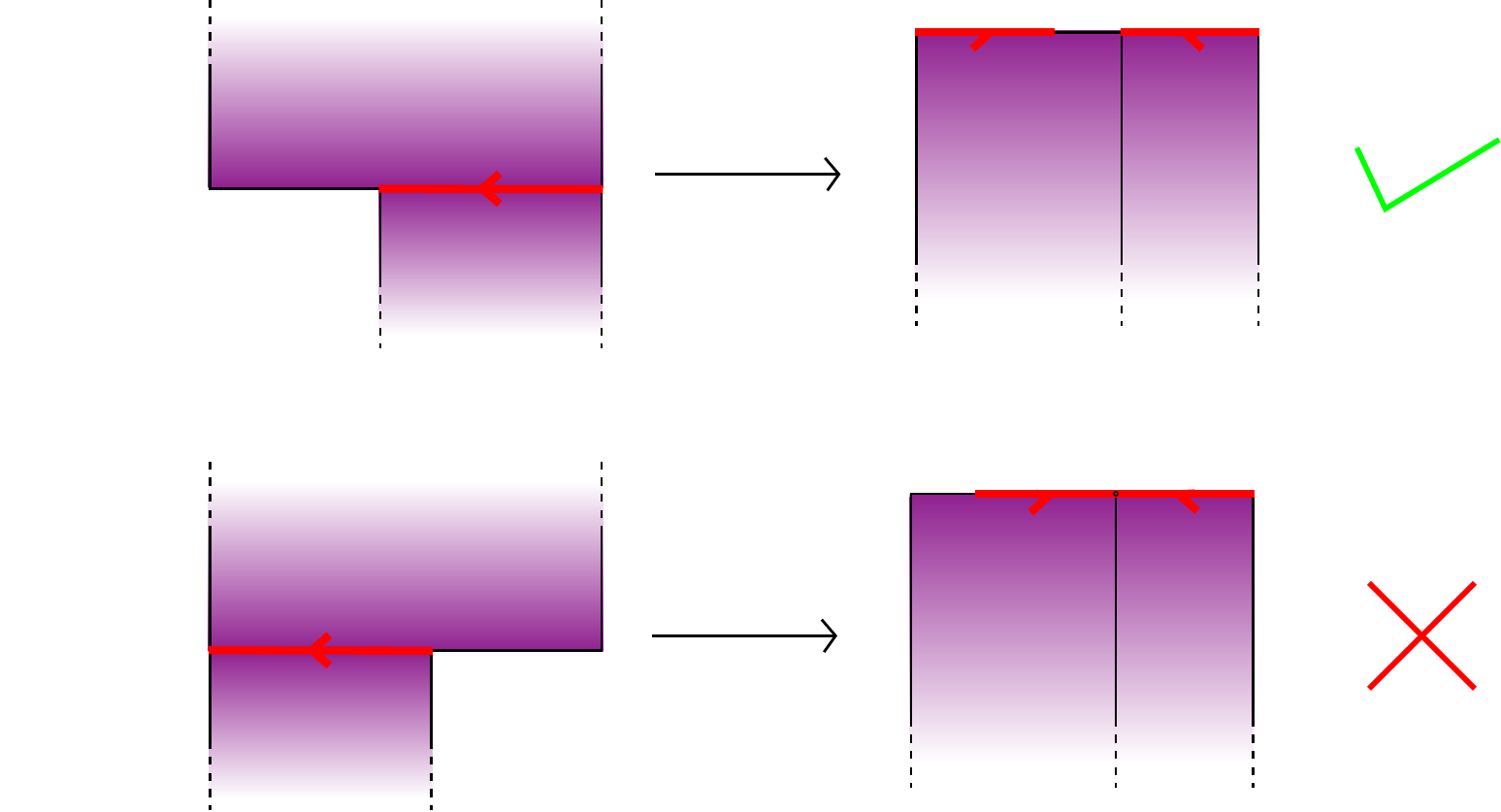
\caption{Illustrating the case when $x_i$ is a critical point of $h$. Here  $\epsilon=1$ and $x_i$ is a local minimum. $\alpha(i)$ must be chosen to be $1$ since we want horizontal edges to have just cone point in the center.}
\end{figure}

\begin{rem}
Note that, as in the figure above, being a local minimum of $h$ means the red edges go to the top of a column rather than the bottom, (since $h$ ``resembles" the vertical \emph{reflection} of $M$).\\
\end{rem} 

In the case of the figure above, $\epsilon=1$ and $x_i$ is a local minimum of $h$, so we want $\alpha(i)$ to be $1$, as it should be if we want a finite type surface, and this is what part $(a)$ of the alignment condition ensures:\\

For critical $x_i, \,\,\alpha(i) = \begin{cases}
      \hfill -\epsilon \hfill & \text{ if $x_i$ is a local max.} \\ 
      \hfill +\epsilon \hfill & \text{ if $x_i$ is a local min.} \\
  		\end{cases}$\\
\vspace{3mm}

Now, let $H_i$ be the horizontal edges of $\partial P_0$, (i.e. $\boldsymbol{H_i :=} \partial P_0\cap E_i$). Identify the left half of $H_i$ with its right half. Thus each $H_i$ will have a cone point of angle $\pi$ at its center; call it $\boldsymbol{Q_i}$.\\

Each $H_i$ is shrunk by $\lambda$ under $f_0$. Moreover, since $(M,\epsilon)$ satisfies the alignment condition, the intervals $f_0(H_i)\subset H_{\phi(i)}$ and $H_{\phi(i)}$ are both centered at $Q_{\phi(i)}$. After folding each horizontal edge $H_i$, $f_0$ is well-defined.\\

\begin{rem} We may assume each $H_i$ has positive width: If two adjacent rows $R_i$ and $R_{i+1}$ had the same width, then $x_{\phi^{-1}(i)}$ could not have been a critical point of $h$. So $R_{\phi^{-1}(i)}\cup R_{\phi^{-1}(i)+1}$ is mapped homeomorphically to $R_i\cup R_{i+1}$, and hence $R_{\phi^{-1}(i)}$ and $R_{\phi^{-1}(i)+1}$ have the same width, so $x_{\phi^{-2}(i)}$ must be noncritical, and so on. Since $\phi$ is a permutation, this implies all iterates of $x_i$ must be noncritical. But we may assume WLOG (see end of section 3) that all cycles in the permutation $\phi$ contain critical points.\\
\end{rem}

Now we turn to the vertical edges of $\partial P_0$. These are stretched by $\lambda$ under $f_0$, and sent to other vertical edges or to the interior of $P_0$. Since the horizontal edges of $\partial P_0$ are folded in half, the vertical edges are connected end-to-end and form a circle, say $\boldsymbol{S}$. i.e. $S$ is the quotient of $\partial P_0$ obtained by collapsing each $H_i$ to a point. Denote the image of $H_i$ in $S$ as $\boldsymbol{a_i}$, $i=0,\dots, n$.\\

For $x_i$ critical, ${\small 1\le i\le n-1}$, for some $c_i>0$, the two arcs starting at $a_i$, say $\boldsymbol{J_i:=}(a_i-c_i,a_i)$ and $\boldsymbol{J'_i:=}(a_i,a_i+c_i)$ are identified and sent to the interior of $P_0$, (see figure $7$). The points $a_i-c_i$ and $a_i+c_i$ are sent to the same point $a_{\phi(i)}$.\\

\begin{figure}[H]
\centering
\def\svgwidth{.6\columnwidth}
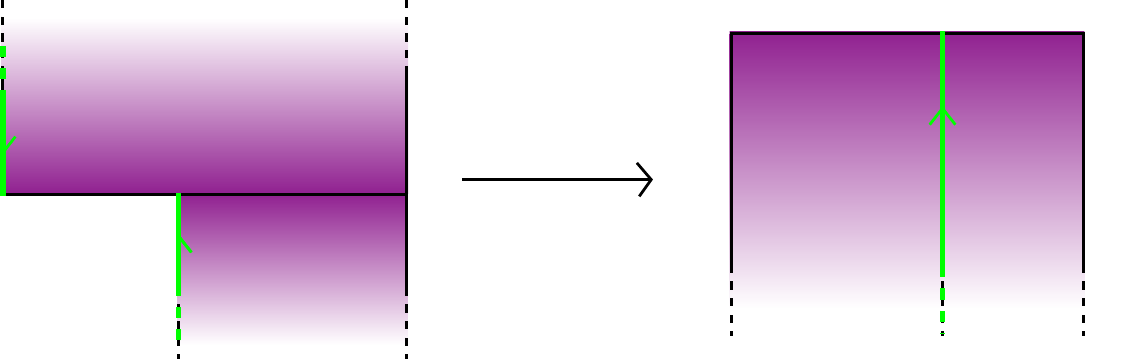
\caption{Illustrating how the vertical edges are glued.}
\end{figure}

For $x_i$ noncritical, $f_0$ does not induce any identifications under one iterate. The vertical edges at $a_i$ are sent to $a_{\phi(i)}$. However, some iterate of $x_i$ is a critical point, and as a result, similar identifications are induced at noncritical $x_i$ (except at $x_0$ and $x_n$). \\

All gluings on vertical edges are induced this way. So the parts that are not identified under any forward iterate form $n-1$ closed intervals. Since $f_0$ stretches by $\lambda>1$, these intervals have to be single points. Moreover, each component of the complement of these $n-1$ points is folded in half, so the $n-1$ points form a single point under identification, call it $\boldsymbol{Q}$. \\

All other points on $\partial P_0$ are regular points (angle = $2\pi$) by inspection. So we have $n+1$ points, $Q_i$, of angle $\pi$ and a single point, $Q$, of angle $(n-1)\pi$. Thus the Euler characteristic of $P=P_0/\sim$ is $2$, i.e. $P$ is a sphere.\\

Taking the double cover of $P$, (ramified at $Q$ and the $Q_i$, when $n$ is even, and only ramified at the $Q_i$ when $n$ is odd), gives us a surface $S_g$ of  genus $g=n/2$ when $n$ is even, and $g=(n-1)/2$ when $n$ is odd. Lift the map $f$ to a map $\psi:S_g\to S_g$, and lift the horizontal and vertical foliations from the plane, to get two transverse measured foliations on $S_g$, invariant under $\psi$.\\

\begin{figure}[H]
\includegraphics[scale=2]{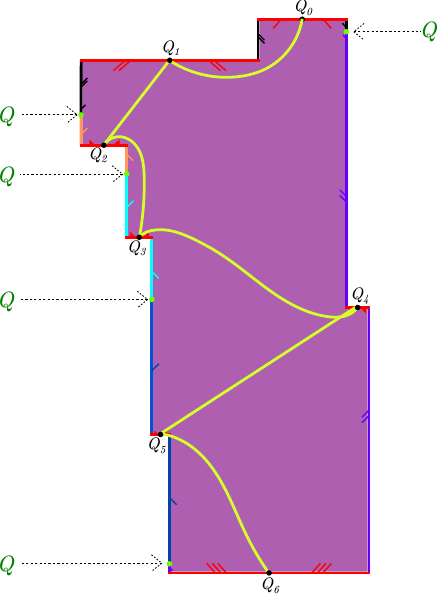}
\caption{The yellow curves, when lifted to the double cover, form closed loops, and give a basis for $H_1(S_g)$.}
\end{figure}

The curve between $Q_i$ and $Q_{i+1}$ in $P_0$ lifts to two curves in the double cover, which together form a closed loop. When $n$ is even, these loops, for $i=0,...,n-1$, form a basis for $H_1(S_g)$. Under this basis, the induced map in homology $\psi_*$ is precisely the matrix $M$.\\
\end{proof}

\section{Acknowledgements}

This paper would not be possible without W. Thurston's ideas to try to construct pseudo-Anosov maps using postcritically-finite maps of intervals, and turning them into $\omega$-limit sets of $2$-dimensional dynamical systems.\\
 
We would also like to thank John H. Hubbard and Dylan Thurston for discussions and helpful suggestions during the process of writing this paper, Joshua P. Bowman, Andre de Carvalho, Giulio Tiozzo and Danny Calegari for helpful insights. 
The first author was partially supported by
the ERC Grant Nb. 10160104.


\end{document}